\newtheorem{theorem}{Theorem}[section]
\newtheorem{lemma}[theorem]{Lemma}
\newtheorem{corollary}[theorem]{Corollary}
\theoremstyle{definition}
\newtheorem{definition}[theorem]{Definition}
\newtheorem{problem}[theorem]{Problem}
\theoremstyle{remark}
\newtheorem{remark}[theorem]{Remark}
\numberwithin{equation}{section}
\newcommand{\abs}[1]{\lvert#1\rvert}
\newcommand{\C}{\mathbb C}
\newcommand{\I}{\mathbb I}
\newcommand{\R}{\mathbb R}
\newcommand{\dH}{d_{\mathcal H}}
\DeclareMathOperator{\diam}{diam}
\DeclareMathOperator{\dist}{dist}
\DeclareMathOperator{\Cone}{Cone}
\begin{document}
\baselineskip5.6mm

\title[Symmetric products of the line]{Symmetric products of the line: \\ embeddings and retractions}

\author{Leonid V. Kovalev}
\address{215 Carnegie, Mathematics Department, Syracuse University, Syracuse, NY 13244-1150}
\email{lvkovale@syr.edu}
\thanks{Supported by the NSF grant DMS-0968756.}

\subjclass[2010]{Primary 30L05; Secondary 54E40, 54B20, 54C15, 54C25}

\begin{abstract}
The $n$th symmetric product of a metric space is the set of its nonempty subsets with cardinality 
at most $n$, equipped with the Hausdorff metric. We prove that every symmetric product of the line 
is an absolute Lipschitz retract and admits a bi-Lipschitz embedding into a Euclidean space of sufficiently high dimension.  
\end{abstract}

\maketitle

\section{Introduction}

Let $X$ be a metric space. The $n$-th symmetric product of $X$ is the set of all nonempty subsets of $X$ with cardinality at most $n$. This set, denoted $X^{(n)}$, is naturally endowed with the Hausdorff metric. But it is not naturally
identified with any subset of the Cartesian product $X^n$. Indeed, Borsuk and Ulam proved that for $n\ge 4$ the symmetric product $[0,1]^{(n)}$ does not admit a topological embedding into $\R^{n}$, and asked whether
there is such an embedding into  $\R^{n+1}$~\cite{BU}. This question remains open except in low dimensions~\cite{AMS,BIY,Sch}.

In the context of metric spaces it is natural to seek embeddings that are bi-Lipschitz, not merely topological.
Borovikova and Ibragimov proved in~\cite{BI} that $\R^{(3)}$ is lipeomorphic to $\R^3$; previously these spaces were shown to be homeomorphic by Borsuk and Ulam~\cite{BU}.
Borovikova, Ibragimov and Yousefi~\cite{BIY} obtained partial results toward bi-Lipschitz embedding of $\R^{(n)}$ into some Euclidean space $\R^m$. Since there is no bi-Lipschitz counterpart of the Menger-N\"obeling theorem for topological spaces, it is not always easy to decide whether a given metric space
admits such an embedding. It turns out that $\R^{(n)}$ does.

\begin{theorem}\label{embed} The symmetric product $\R^{(n)}$ admits a bi-Lipschitz embedding into $\R^m$ where
$m = 2\, \lfloor (e-1) \, n! \rfloor$.
\end{theorem}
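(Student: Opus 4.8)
The plan is to construct explicitly a finite list of Lipschitz functions $f_1,\dots,f_m$ on $\R^{(n)}$ whose joint map $\Psi=(f_1,\dots,f_m)\colon\R^{(n)}\to\R^m$ satisfies $\|\Psi(A)-\Psi(B)\|\gtrsim\dH(A,B)$; since any list of Lipschitz functions gives a Lipschitz map, the entire content is (i) well-definedness, continuity on all of $\R^{(n)}$, and a uniform Lipschitz bound for each $f_i$, and (ii) this lower bound. The functions I would draw on come from one source: for an affine form $\Lambda(x_1,\dots,x_p)=\sum_i c_ix_i$ (or an iterated $\min$/$\max$ of finitely many such forms), the maps $A\mapsto\inf\{\Lambda(x_1,\dots,x_p):x_i\in A\}$ and $A\mapsto\sup\{\Lambda(x_1,\dots,x_p):x_i\in A\}$ are Lipschitz with respect to $\dH$, with constant controlled by $\sum_i|c_i|$. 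Since $\R^{(n)}$ is unbounded and carries the dilations $A\mapsto\lambda A$ and translations $A\mapsto A+t$, the coordinates cannot be distances to fixed test points; but restricting to forms with $\sum_i c_i=1$ makes these $\inf$/$\sup$ functions translation- and scale-equivariant, i.e.\ they re-center and re-scale themselves using $A$'s own geometry. The simplest ones are $\min A$ and $\max A$; the task is to add enough further such functions to resolve the internal structure of $A$ at its own scale.

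I would run the construction by induction on $n$, guided by the fact that $m=2\lfloor(e-1)\,n!\rfloor$ equals $\sum_{k=1}^{n}2\,n!/k!$ and satisfies the recursion $m(n)=n\,m(n-1)+2$, with $m(1)=2$ and $\R^{(1)}=\R\hookrightarrow\R^2$ trivial. For the step, split a set at its gaps: if $A=\{a_1<\dots<a_k\}$ with $2\le k\le n$, the $k-1\le n-1$ consecutive gaps $a_{i+1}-a_i$ carry a Lipschitz partition of unity $w_i(A)\ge 0$, $\sum_i w_i=1$, concentrated on the largest gap(s); cutting at index $i$ writes $A=A_i^{-}\sqcup A_i^{+}$ with $A_i^{\pm}\in\R^{(n-1)}$. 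Feeding each piece, suitably renormalized — this renormalization is exactly what the barycentric forms record — into the $(n-1)$-dimensional embedding, then weighting and assembling over the $\le n-1$ cut positions together with the ``no cut'' (small-diameter / singleton) case, produces the factor $n$; the coordinates $\min A$ and $\max A$ locating the overall window are the ``$+2$''. The super-linear count $\sum_{k=1}^{n}n!/k!$ then appears as the total number of coordinate functions generated by unwinding the recursion, the $k$-th summand counting the cutting histories terminating at a $k$-point sub-configuration, and the overall factor $2$ reflects that each sub-configuration is read off through one ``lower'' and one ``upper'' barycentric functional.

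The Lipschitz (upper) bound is then bookkeeping: each coordinate is an iterated $\inf$/$\sup$ of barycentric forms with $O_n(1)$ total coefficient mass, hence $O_n(1)$-Lipschitz, \emph{provided} it is continuous across the strata $\{|A|=k\}$; this is arranged by the partition of unity $w_i$ and by choosing the forms so that a coordinate degrades to the correct lower-dimensional reading as a gap closes. The lower bound is the real work, and I would argue by multiple scales: given $A\ne B$, if $\min A,\max A$ already differ by $\gtrsim\dH(A,B)$ we are done; otherwise $A$ and $B$ lie in a common window at comparable scale, and one compares their dominant-gap decompositions. Either some gap of one set has no counterpart in the other of comparable width and position, which is seen directly by a gap-sensitive coordinate, or the decompositions match and $\dH(A,B)$ is captured, up to a constant, by $\dH$ of a matched pair of $(n-1)$-point sub-configurations, where the inductive lower bound applies after the affine renormalization encoded in the barycentric functionals.

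The main obstacle throughout is that the two requirements on the coordinates pull against each other. Scale- and translation-equivariance is unavoidable because $\R^{(n)}$ is unbounded and dilation-invariant, so the coordinates must adapt to $A$; but the obvious scale-covariant quantities — the $j$-th smallest element of $A$, a sub-cluster of $A$, the width of the $j$-th gap — are discontinuous exactly where two points of $A$ coalesce and $|A|$ drops by one, i.e.\ along the stratum boundaries that make $\R^{(n)}$ more than a Lipschitz manifold. (For instance $A\mapsto\text{``second smallest element of }A\text{''}$ jumps as $\{0,\varepsilon,1\}$ degenerates to $\{0,1\}$, although $\dH\to 0$.) Producing barycentric $\inf$/$\sup$ functionals that are simultaneously scale-covariant, globally Lipschitz across every such coalescence locus, and jointly able to detect an arbitrarily thin extra cluster with a signal of the correct (small) order is the crux; it is this constraint that forces cutting at all gaps rather than one, carrying the partition of unity, and keeping a separate coordinate for each cutting history, and hence it is the source of the count $\sum_{k=1}^{n}n!/k!$.
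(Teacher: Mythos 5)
Your plan correctly guesses the recursion $m(n)=n\,m(n-1)+2$, correctly assigns the ``$+2$'' to $\min A$ and the overall scale, and correctly names the central difficulty (scale-covariant coordinates that stay Lipschitz across the coalescence loci where $\abs{A}$ drops). But the two load-bearing steps are not carried out, and as described they face obstructions you identify without overcoming. For the factor-$n$ step: cutting $A=\{a_1<\dots<a_k\}$ after position $i$ produces \emph{two} pieces $A_i^-,A_i^+$, so the count over $\le n-1$ cut positions plus a ``no cut'' slot gives up to $2n-1$ sub-configurations, not $n$; the dimension bookkeeping is unverified. More seriously, $A\mapsto A_i^{\pm}$ is not even continuous in $\dH$ (it jumps when a point crosses the cut or when $\abs{A}$ changes), and damping it by a weight $w_i(A)$ concentrated on the largest gap requires a quantitative product estimate at the scale of $A$ that is never stated; the weights themselves are ill-defined or non-Lipschitz when two gaps tie for largest, and smoothing them re-introduces mismatched decompositions. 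For the lower bound, the dichotomy ``either a gap of one set has no counterpart, or the decompositions match and induction applies'' is not an argument: when $A$ and $B$ have different cardinalities or dominant gaps in different places, $\Psi(A)$ and $\Psi(B)$ are combinations, with different weights, of embeddings of different sub-configurations, and nothing rules out accidental near-coincidence at a scale far below $\dH(A,B)$. So this is a program rather than a proof.

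It may help to see how the paper routes around exactly the crux you flag, rather than fighting it. The ``$+2$'' is realized as a lipeomorphism $\R^{(n)}\cong\R\times\Cone(\I_*^{(n)})$, where $\I_*^{(n)}$ consists of subsets of $[0,1]$ containing both endpoints; the cone metric absorbs the degeneration as $\diam A\to 0$. Then $\I_*^{(n)}$ is sent into $\C^{(n-1)}$ by $t\mapsto e^{2\pi i t}$, identifying the two mandatory endpoints and removing one coalescence issue, and the factor $n$ comes from a discrete-tomography pigeonhole: project onto the orthogonal complements of $n$ distinct lines; each projection is trivially $1$-Lipschitz on symmetric products, and if all $n$ projections of $A$ and $B$ were within $\rho/M$ of each other, some point of the $(n-1)$-point set $B$ would lie in two of the $n$ thin cylinders around a fixed $a\in A$, hence within $\rho$ of $a$. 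No partition of unity, no gap-matching, and no stratum-crossing estimates are ever needed. Your barycentric-coordinate program is an interesting alternative, but in its present form the factor-$n$ construction and the lower bound are genuine gaps.
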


The dimension in Theorem~\ref{embed} is much larger than the desired embedding dimension $m=n+1$, 
which remains conjectural. On the other hand, the proof is short and easily generalizes to symmetric products of other Euclidean spaces.

\begin{theorem}\label{embed2} The symmetric product $(\R^d)^{(n)}$ admits a bi-Lipschitz embedding into $\R^m$ where $m = 2(n+1)^{d-1}\lfloor (e-1) \, n! \rfloor$.
\end{theorem}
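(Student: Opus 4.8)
The plan is to reduce Theorem~\ref{embed2} to Theorem~\ref{embed} by factoring a configuration through a carefully chosen finite family of linear functionals on $\R^d$; for $d=1$ the statement is Theorem~\ref{embed}, so assume $d\ge 2$. For each multi-index $\mathbf r=(r_2,\dots,r_d)\in\{0,1,\dots,n\}^{d-1}$ set $\ell_{\mathbf r}\colon\R^d\to\R$, $\ell_{\mathbf r}(x)=x_1+r_2x_2+\cdots+r_dx_d$, and define
\[
\Psi\colon(\R^d)^{(n)}\to\!\!\prod_{\mathbf r\in\{0,\dots,n\}^{d-1}}\!\!\R^{(n)},\qquad \Psi(F)=\bigl(\ell_{\mathbf r}(F)\bigr)_{\mathbf r},
\]
a product of $(n+1)^{d-1}$ copies of $\R^{(n)}$, equipped with the $\ell^\infty$ combination of Hausdorff metrics. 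Since each $\ell_{\mathbf r}$ is Lipschitz on $\R^d$ with constant $\abs{(1,r_2,\dots,r_d)}\le\sqrt{1+(d-1)n^2}$, and a Lipschitz map between metric spaces induces one between their $n$th symmetric products with the same constant, $\Psi$ is Lipschitz. The substance of the proof is to show that $\Psi$ is also co-Lipschitz.

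For the lower bound, suppose $\dH(\ell_{\mathbf r}(F),\ell_{\mathbf r}(F'))\le\varepsilon$ for all $\mathbf r$, and fix a point $x\in F$. For each $\mathbf r$ there is $y(\mathbf r)\in F'$ with $\abs{\langle(1,\mathbf r),\,x-y(\mathbf r)\rangle}\le\varepsilon$. Because $F'$ has at most $n$ points, the pigeonhole principle produces a single $y_0\in F'$ that serves for all $\mathbf r$ in a set $R_0$ with $\abs{R_0}\ge(n+1)^{d-1}/\abs{F'}>(n+1)^{d-2}$. An affine hyperplane in $\R^{d-1}$ contains at most $(n+1)^{d-2}$ points of the grid $\{0,\dots,n\}^{d-1}$ (fix all coordinates but one that enters with nonzero coefficient), so $R_0$ lies in no affine hyperplane; equivalently the vectors $(1,\mathbf r)$, $\mathbf r\in R_0$, span $\R^d$. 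Picking a basis among them expresses $x-y_0$ via a linear system whose matrix is one of finitely many invertible integer matrices, giving $\abs{x-y_0}\le C\varepsilon$ with $C=C(n,d)$, hence $\dist(x,F')\le C\varepsilon$. By symmetry $\dist(y,F)\le C\varepsilon$ for every $y\in F'$, so $\dH(F,F')\le C\varepsilon$; taking $\varepsilon=\max_{\mathbf r}\dH(\ell_{\mathbf r}(F),\ell_{\mathbf r}(F'))$ yields the co-Lipschitz estimate, and thus $\Psi$ is a bi-Lipschitz embedding onto its image.

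Finally, Theorem~\ref{embed} supplies a bi-Lipschitz embedding $\phi\colon\R^{(n)}\to\R^{m_1}$ with $m_1=2\lfloor(e-1)n!\rfloor$; applying $\phi$ in each coordinate embeds $\prod_{\mathbf r}\R^{(n)}$ bi-Lipschitzly into $\R^{(n+1)^{d-1}m_1}=\R^{m}$, since the $\ell^\infty$ product metric and the Euclidean metric on $\R^m$ agree up to constants depending only on the number of factors. Composing with $\Psi$ gives the desired embedding. I expect the main obstacle to be the co-Lipschitz step above, and within it the requirement that the ``popular'' fiber $R_0$ be large enough to span $\R^d$; this is precisely what forces $(n+1)^{d-1}$ slices and produces the stated dimension.
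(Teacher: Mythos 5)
Your proof is correct, and it arrives at the same intermediate target as the paper --- a bi-Lipschitz embedding of $(\R^d)^{(n)}$ into a product of $(n+1)^{d-1}$ copies of $\R^{(n)}$, followed by a coordinatewise application of Theorem~\ref{embed} --- but it implements the tomography step differently. The paper iterates Lemma~\ref{tomo} $d-1$ times: each pass projects onto the orthogonal complements of $n+1$ distinct lines, drops the ambient dimension by one, and multiplies the number of factors by $n+1$; its co-Lipschitz bound rests on the fact that two distinct cylinders of radius $r$ intersect in a set of diameter $O(r)$, combined with the pigeonhole ``$n+1$ cylinders versus at most $n$ points.'' You instead apply all $(n+1)^{d-1}$ functionals $\ell_{\mathbf r}$ at once and prove the co-Lipschitz bound in one shot: pigeonhole produces a single $y_0\in F'$ serving more than $(n+1)^{d-2}$ indices, an affine hyperplane meets the grid $\{0,\dots,n\}^{d-1}$ in at most $(n+1)^{d-2}$ points, so the corresponding vectors $(1,\mathbf r)$ span $\R^d$, and inverting one of finitely many invertible integer matrices converts the scalar estimates into $\abs{x-y_0}\lesssim\varepsilon$. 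All the supporting claims check out (the hyperplane--grid count, the equivalence between affine spanning of $R_0$ and linear spanning by the vectors $(1,\mathbf r)$, the comparison of the $\ell^\infty$ and Euclidean product metrics), and the dimension count $(n+1)^{d-1}\cdot 2\lfloor(e-1)\,n!\rfloor$ matches the statement. Your one-shot argument avoids the induction on $d$ and makes the recovery constant explicit in terms of integer matrices; the paper's iteration reuses a single codimension-one lemma that is already needed for Theorem~\ref{embed}, so nothing new has to be proved there.
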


Since $X^{(n)}$ contains an isometric copy of $X$ (namely, the set of singletons), we have the following
corollary of Theorem~\ref{embed2}.

\begin{corollary}\label{embed3} The symmetric product of a metric space $X$ admits a bi-Lipschitz
embedding into a Euclidean space if and only if $X$ does.
\end{corollary}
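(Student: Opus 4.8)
The plan is to reduce Corollary~\ref{embed3} to Theorem~\ref{embed2} by lifting a bi-Lipschitz embedding of $X$ to one of $X^{(n)}$. The forward implication is immediate: the singletons $\{\{x\}:x\in X\}$ form an isometric copy of $X$ inside $X^{(n)}$, so restricting any bi-Lipschitz embedding of $X^{(n)}$ into a Euclidean space to this copy gives a bi-Lipschitz embedding of $X$.

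For the converse, suppose $f\colon X\to\R^d$ is $L$-bi-Lipschitz, i.e. $L^{-1}d_X(x,y)\le\abs{f(x)-f(y)}\le L\,d_X(x,y)$ for all $x,y\in X$, where $d_X$ denotes the metric of $X$. Since $f$ is injective, it preserves cardinalities of finite sets, so $A\mapsto f(A)$ is a well-defined map $f^{(n)}\colon X^{(n)}\to(\R^d)^{(n)}$. The key point is that $f^{(n)}$ is itself $L$-bi-Lipschitz for the Hausdorff metrics. This follows directly from the formula $\dH(A,B)=\max\{\sup_{a\in A}\dist(a,B),\ \sup_{b\in B}\dist(b,A)\}$ together with the fact that $f$ distorts every pairwise distance by a factor in $[L^{-1},L]$: for each $a\in A$ one gets $L^{-1}\dist(a,B)\le\dist(f(a),f(B))\le L\,\dist(a,B)$, and taking suprema (and the symmetric inequalities with $A$ and $B$ interchanged) yields $L^{-1}\dH(A,B)\le\dH(f(A),f(B))\le L\,\dH(A,B)$.

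It then remains to compose $f^{(n)}$ with the bi-Lipschitz embedding of $(\R^d)^{(n)}$ into $\R^m$, with $m=2(n+1)^{d-1}\lfloor(e-1)\,n!\rfloor$, provided by Theorem~\ref{embed2}; the composition is the desired bi-Lipschitz embedding of $X^{(n)}$. I do not expect a genuine obstacle in this argument: the one step calling for care is the verification that $A\mapsto f(A)$ is bi-Lipschitz on the Hausdorff metrics, and even that is a short computation once injectivity of $f$ is used to guarantee that $f(A)$ stays within the cardinality bound defining $(\R^d)^{(n)}$.
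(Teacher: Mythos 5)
Your proposal is correct and follows exactly the route the paper intends: the ``only if'' direction comes from the isometric copy of $X$ as singletons, and the ``if'' direction composes the induced map $A\mapsto f(A)$ (bi-Lipschitz for the Hausdorff metrics, as you verify) with the embedding of $(\R^d)^{(n)}$ from Theorem~\ref{embed2}. The paper leaves these details implicit in a one-sentence remark, so your write-up simply spells out the same argument.
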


The proofs of the results stated above are constructive, but they do not say much about the structure of the image of the embedding. The following theorem addresses this issue.

\begin{theorem}\label{ALR} The symmetric product $\R^{(n)}$ is an absolute Lipschitz retract. 
In particular, its image under the embedding of Theorem~\ref{embed} is a Lipschitz retract of $\R^m$.
\end{theorem}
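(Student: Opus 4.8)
\emph{The plan} is to reduce the statement, via Theorem~\ref{embed} together with standard properties of absolute Lipschitz retracts, to a single geometric assertion about $\R^{(n)}$, and then to prove that assertion by an explicit construction. Two standard facts are used in the reduction. First, a finite-dimensional normed space, in particular $\R^m$, is an absolute Lipschitz retract, and in fact an absolute Lipschitz \emph{extensor}: any Lipschitz map from a subset of an arbitrary metric space into $\R^m$ extends, with comparable Lipschitz constant, by applying McShane's extension theorem to each coordinate. Second, it follows that a Lipschitz retract $Z$ of $\R^m$ is itself an absolute Lipschitz retract: given an isometric copy of $Z$ inside a metric space $W$, one extends the inclusion $Z\hookrightarrow\R^m$ to a Lipschitz map $W\to\R^m$ and postcomposes with the retraction $\R^m\to Z$. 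Since the class of absolute Lipschitz retracts is invariant under bi-Lipschitz homeomorphism, and $\R^{(n)}$ is bi-Lipschitz equivalent, by Theorem~\ref{embed}, to a subset $Z=\Phi(\R^{(n)})$ of $\R^m$, it suffices to prove the ``in particular'' clause, namely that $Z$ is a Lipschitz retract of $\R^m$; the first assertion of the theorem then follows.

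\emph{Producing the retraction.} For this I would invoke the Lipschitz extension theorem of Lang and Schlichenmaier: if $X$ is a metric space of finite Nagata dimension and $Y$ is complete and Lipschitz $k$-connected for every $k\ge 0$ --- meaning that each $L$-Lipschitz map $S^k\to Y$ extends to a $CL$-Lipschitz map $B^{k+1}\to Y$ with $C$ independent of $L$ --- then every Lipschitz map from a subset of $X$ into $Y$ extends over $X$ with Lipschitz constant enlarged by a factor depending only on the Nagata dimension of $X$ and the connectivity constants of $Y$. I would apply this with $X=\R^m$, which has Nagata dimension $m$, and $Y=Z$. The space $\R^{(n)}$ is complete, since a Hausdorff limit of sets of cardinality at most $n$ again has cardinality at most $n$, and hence so is $Z$; and Lipschitz $k$-connectedness passes between $Z$ and $\R^{(n)}$ because it is a bi-Lipschitz invariant. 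Applying the extension theorem to the identity map of $Z$, regarded as a Lipschitz map from the subset $Z\subseteq\R^m$ into $Z$, then yields the desired Lipschitz retraction $\R^m\to Z$.

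\emph{The main step, and the obstacle.} It remains to prove that $\R^{(n)}$ is Lipschitz $k$-connected for every $k$. Topologically this is trivial, since $\R^{(n)}$ is contractible; the substance is the uniform metric bound. Coning each set $f(\omega)$ to a fixed point is too wasteful, since the sets $f(\omega)$ can have diameter far exceeding $L=\operatorname{Lip}(f)$ even when $f$ varies slowly. Instead I would argue by induction on $n$, splitting $S^k$ by a Lipschitz partition of unity subordinate to the set where the smallest gap of $f(\omega)$ exceeds $cL$ and the set where it is less than $2cL$, for a suitable constant $c=c(n)$. On the first region the Hausdorff metric coincides, locally, with the $\ell^{\infty}$-metric on the convex cone $\{x_1<\dots<x_n\}\subseteq\R^n$, so spheres mapping there are filled as in a convex subset of $\ell^{\infty}_n$. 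On the second region one works near the degenerate locus $\R^{(n-1)}\subseteq\R^{(n)}$, where $\R^{(n)}$ is built, in a bi-Lipschitz controlled way, from pieces modeled on products of Euclidean factors with symmetric products $\R^{(n_i)}$, $n_i<n$, to which the inductive hypothesis applies; the essential point is that when points of a moving configuration collide the cardinality only decreases, so such collisions can be exploited to keep the filling inside $\R^{(n)}$. The one genuinely delicate matter --- and the main obstacle --- is to carry out this filling near the degenerate locus so that the Lipschitz constant of the extension depends only on $n$ and $k$, and not on the diameter or location of $f(S^k)$; once that is done, patching the two regions together with the partition of unity and bookkeeping the constants completes the proof.
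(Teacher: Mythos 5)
Your reduction is exactly the paper's: bi-Lipschitz invariance of the absolute Lipschitz retract property, McShane/Lang--Schlichenmaier, and the observation that everything comes down to proving that $\R^{(n)}$ is Lipschitz $k$-connected for all $k$ with constants depending only on $n$. But that last assertion is the entire substance of the theorem, and you have not proved it --- you explicitly flag the filling near the degenerate locus as an unresolved obstacle. As written, the proposal therefore has a genuine gap precisely where the work is. A secondary concern with your sketch: a ``Lipschitz partition of unity'' cannot be used to patch maps into $\R^{(n)}$ in any obvious way, since the target has no convex structure; gluing two local fillings along an overlap is itself a nontrivial problem in this space, and making that precise would likely require the very extension property you are trying to establish.

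For comparison, the paper's proof of Lipschitz $k$-connectedness avoids any domain decomposition. Given $L$-Lipschitz $f\colon\partial B\to\R^{(n)}$, it fixes a single base point $x_0$ and runs a dichotomy on $\diam f(x_0)$. If $\diam f(x_0)\le 6L(n-1)$, the na\"ive coning $\widetilde f(rx)=rf(x)$ (after translating so $0\in f(x_0)$) is already $C_nL$-Lipschitz --- your worry that coning is ``too wasteful'' only applies when the diameter is large compared to $L$, and that case is excluded here. If $\diam f(x_0)> 6L(n-1)$, a pigeonhole argument splits $f(x_0)$ into two clusters separated by more than $3L\diam(\partial B)$; since $f$ is $L$-Lipschitz on a bounded domain, this separation persists for every $f(x)$, giving a global decomposition $f=g\cup h$ with $g,h$ valued in $\R^{(n-1)}$ (Lemma~\ref{lipdecomp}). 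One extends $g$ and $h$ by induction and takes the union, landing in $\R^{(2n-2)}$; the final ingredient, which your sketch lacks entirely, is the explicit Lipschitz retraction $\R^{(2n-2)}\to\R^{(n)}$ of Lemma~\ref{lr}, built from the linear order on $\R$ by repeatedly collapsing the closest pair of points. That retraction is what lets the union of the two extensions be pushed back into $\R^{(n)}$, and it is the one place where the one-dimensionality of $\R$ is used (the paper notes the argument does not extend to $\R^d$ for this reason). Your instinct that collisions only decrease cardinality is pointing in the right direction, but to complete the proof you would need to supply quantitative versions of both the decomposition step and the collapse-the-closest-pair retraction.
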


A metric space $X$ is a \emph{Lipschitz retract} of a larger space $Y$ if there is a Lipschitz map $r\colon Y\to X$
that fixes $X$ pointwise. If $X$ has this property for all choices of $Y$, it is an \emph{absolute Lipschitz retract}. 
It was previously known that $\R^{(n)}$ is quasiconvex~\cite[Theorem 4.1]{BIY}, which is a weaker property 
than being an absolute Lipschitz retract.

It remains unknown whether the property of being an absolute Lipschitz retract is inherited by symmetric products in general. The topological version of this question was raised already in~\cite{BU}. The metric version was recently considered in~\cite{Go} for spaces of unordered $n$-tuples, see also Problem~1.4 of the AIM problem list~\cite{AIMPL}.

\section{Preliminaries: metrics on cones}

A map $f\colon X\to Y$ is Lipschitz if there exists a constant $L$ such that
\[
d_Y(f(x_1),f(x_2))\le L\,d_X(x_1,x_2)\quad \text{ for all } \ x_1,x_2\in X.
\]
If $f$ satisfies a two-sided bound
\[
L^{-1}\,d_X(x_1,x_2)\le d_Y(f(x_1),f(x_2))\le L\,d_X(x_1,x_2)\quad \text{ for all } \ x_1,x_2\in X,
\]
then it is a bi-Lipschitz embedding. A surjective bi-Lipschitz embedding is called a lipeomorphism.

The Hausdorff distance $\dH(A,B)$ between two subsets $A,B$ of a metric space $X$ is the infimum of all number $r>0$
such that $A$ is contained in the $r$-neighborhood of $B$, and vice versa.

Notation  $a\lesssim b$ means that $a\le Cb$ where $C$ is either universal or depends only on dimension, such as
$n$ in $\R^n$ or $\R^{(n)}$. If both $a\lesssim b$ and $a\gtrsim b$ hold, then $a\approx b$.

The product of two metric spaces $X\times Y$ is given the Euclidean product metric,
$d^2((x_1,y_1),(x_2,y_2)) =d_X^2(x_1,x_2) + d_Y^2(y_1,y_2)$.

\begin{definition} Given a metric space $X$ of diameter at most $2$,
the  cone over $X$ is  the set
\[
\Cone(X) = X\times [0,\infty) / (X\times \{0\})
\]
with the metric
\begin{equation}\label{mymetric}
 d_c(t_1x_1, t_2x_2) = \abs{t_1-t_2}+\min(t_1,t_2)\,d(x_1,x_2).
\end{equation}
Here $tx$ is an abbreviation for $(x,t)$.
\end{definition}

To prove the triangle inequality for $d_c$, take three points $t_ix_i$, $i=1,2,3$, and let $m=\min(t_1,t_2,t_3)$. Adding the inequalities
\[
\abs{t_1-t_3}\le \abs{t_1-t_2}+\abs{t_2-t_3} - 2(\min(t_1,t_3)-m)
\]
and
\[
\min(t_1,t_3)\,d(x_1,x_3) \le m\,d(x_1,x_3) + 2(\min(t_1,t_3)-m),
\]
we arrive at
\[
\begin{split}
d_c(t_1x_1,t_3x_3)& \le \abs{t_1-t_2}+\abs{t_2-t_3} + m\,d(x_1,x_3) \\
&\le d_c(t_1x_2,t_2x_2)+d_c(t_2x_2,t_3x_3)
\end{split}
\]
as desired.

In the literature one frequently finds another cone metric
\begin{equation}\label{theirmetric}
\widetilde{d_c}(t_1x_1, t_2x_2) = \sqrt{t_1^2+t_2^2-2t_1t_2\cos d(x_1,x_2)}
\end{equation}
see, for example,~\cite[p. 91]{BBI}.
The following lemma implies the bi-Lipschitz equivalence of $\widetilde{d_c}$ and $d_c$.

\begin{lemma}\label{cone00} Let $(X,d)$ be a metric space with $\diam X\le 2$. Suppose that $\rho$ is
a metric on $\Cone(X)$ such that
\begin{equation}\label{cone001}
\begin{split}
\rho(tx_1,tx_2) &= t \,d(x_1,x_2) \\
\rho(t_1x_1,t_2x_2) & \ge  \abs{t_1-t_2} \\
\rho(t_1x,t_2x) &\le  10\abs{t_1-t_2}
\end{split}
\end{equation}
 for all $t,t_1,t_2\ge 0$ and $x,x_1,x_2\in X$.
Then $\rho\approx d_c$.
\end{lemma}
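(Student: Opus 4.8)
The plan is to obtain the two-sided comparison $\rho\approx d_c$ directly from the three relations in~\eqref{cone001}, using only the triangle inequality for $\rho$ and the explicit formula~\eqref{mymetric}. By the symmetry of both metrics I may assume throughout that $t_1\le t_2$, so that $\min(t_1,t_2)=t_1$, $\abs{t_1-t_2}=t_2-t_1$, and the goal is to show $\rho(t_1x_1,t_2x_2)\approx\abs{t_1-t_2}+t_1\,d(x_1,x_2)$ with absolute constants.

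For the upper bound I would route through the auxiliary point $t_1x_2\in\Cone(X)$. The triangle inequality gives
\[
\rho(t_1x_1,t_2x_2)\le \rho(t_1x_1,t_1x_2)+\rho(t_1x_2,t_2x_2),
\]
and the first and third lines of~\eqref{cone001} bound the right-hand side by $t_1\,d(x_1,x_2)+10\abs{t_1-t_2}$. Since this quantity is at most $10\,d_c(t_1x_1,t_2x_2)$, we get $\rho\le 10\,d_c$.

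For the lower bound I would use the same three points in the opposite order. From
\[
t_1\,d(x_1,x_2)=\rho(t_1x_1,t_1x_2)\le\rho(t_1x_1,t_2x_2)+\rho(t_2x_2,t_1x_2)\le\rho(t_1x_1,t_2x_2)+10\abs{t_1-t_2}
\]
(again by the first and third lines of~\eqref{cone001}) together with the second line $\rho(t_1x_1,t_2x_2)\ge\abs{t_1-t_2}$, the error term $10\abs{t_1-t_2}$ is reabsorbed: $t_1\,d(x_1,x_2)\le 11\,\rho(t_1x_1,t_2x_2)$. Averaging this with $\rho(t_1x_1,t_2x_2)\ge\abs{t_1-t_2}$ gives
\[
\rho(t_1x_1,t_2x_2)\ge\tfrac12\abs{t_1-t_2}+\tfrac1{22}\,t_1\,d(x_1,x_2)\ge\tfrac1{22}\,d_c(t_1x_1,t_2x_2),
\]
which together with the upper bound proves $\rho\approx d_c$.

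I do not anticipate a real obstacle: the argument is three applications of the triangle inequality. The one point worth flagging is the role of the second hypothesis — it is precisely what makes the constant $10$ in the third hypothesis harmless, since it lets the term $\abs{t_1-t_2}$ be controlled by $\rho$ itself; without a lower bound $\rho\gtrsim\abs{t_1-t_2}$ the $t$-direction could degenerate and the chain of estimates would not close. (For the intended application to $\widetilde{d_c}$ of~\eqref{theirmetric} one checks that $\widetilde{d_c}$ satisfies~\eqref{cone001} after replacing $d$ by the comparable metric $2\sin(d/2)$, which is legitimate since $\diam X\le 2<\pi$; combined with the monotonicity of $d_c$ in the base metric, the lemma then yields $\widetilde{d_c}\approx d_c$.)
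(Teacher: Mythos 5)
Your proof is correct and follows essentially the same route as the paper's: the upper bound via the triangle inequality through the corner point with the smaller parameter, and the lower bound via the reverse triangle inequality, absorbing the $10\abs{t_1-t_2}$ error using the hypothesis $\rho\ge\abs{t_1-t_2}$. The only differences are cosmetic (which base point carries the intermediate vertex, and averaging versus adding $11\rho\ge 11\abs{t_1-t_2}$ at the end), yielding the comparable constants $\tfrac1{22}$ versus the paper's $\tfrac1{12}$.
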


\begin{proof} Take two points $t_1x_1$ and $t_2x_2$ with $t_1\ge t_2$.
From the triangle inequality and~\eqref{cone001} it follows that
\[
\begin{split}
\rho(t_1x_1,t_2x_2) &\le \rho(t_1x_1,t_2x_1)+\rho(t_2x_1,t_2x_2)
\le 10\abs{t_1-t_2}+t_2\,d(x_1,x_2)  \\
&\le 10\,d_c(t_1x_1,t_2x_2).
\end{split}
\]
In the opposite direction, adding the inequalities
\[
\rho(t_1x_1,t_2x_2) \ge \rho(t_2x_1,t_2x_2) - \rho(t_1x_1,t_2x_1)
\ge t_2\,d(x_1,x_2)  - 10 \abs{t_1-t_2}
\]
and 
\[ 11\,\rho(t_1x_1,t_2x_2)\ge 11\abs{t_1-t_2}, \] 
we obtain $12\,\rho(t_1x_1,t_2x_2)\ge d_c(t_1x_1,t_2x_2)$. \qedhere
\end{proof}

If $X$ is bounded subset of $\R^m$, the Euclidean space structure gives yet another cone construction.
By translating and scaling $X$, we may assume that $0\in X$ and $\diam X\le 2$. Consider $\R^{m+1}$ as a linear superspace of $\R^{m}$ with the extra basis vector $e_{0}$. The set
\[
\{t x + (1-t) e_{0}\colon  t\ge 0,\ x\in X\}
\]
inherits the metric from $\R^{m+1}$ which satisfies~\eqref{cone001}.  Thus,
Lemma~\ref{cone00} yields a corollary.

\begin{corollary}\label{cone0} Suppose that a metric space $X$ with $\diam X\le 2$ admits a bi-Lipschitz embedding
into $\R^m$. Then $\Cone(X)$ admits a bi-Lipschitz embedding into $\R^{m+1}$.
\end{corollary}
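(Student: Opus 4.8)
The plan is to move the given embedding into a normalized position and then apply the Euclidean cone construction described just above, whose metric estimates are exactly the hypotheses of Lemma~\ref{cone00}. First I would fix a bi-Lipschitz embedding $f\colon X\to\R^m$. Post-composing $f$ with a translation and a scaling of $\R^m$ keeps it a bi-Lipschitz embedding, so I may assume $0\in f(X)$ and $\diam f(X)\le 2$. Let $d'(x_1,x_2)=\abs{f(x_1)-f(x_2)}$ be the metric pulled back from $\R^m$; then $d'\approx d$ on $X$, with constants depending on $f$.

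Next I would observe that it suffices to bi-Lipschitz embed the cone taken with the metric~\eqref{mymetric} built from $d'$ instead of $d$. Indeed, if $C^{-1}d'\le d\le C\,d'$ with $C\ge 1$, then in~\eqref{mymetric} the summand $\abs{t_1-t_2}$ is unaffected while $\min(t_1,t_2)\,d(x_1,x_2)$ lies between $C^{-1}$ and $C$ times $\min(t_1,t_2)\,d'(x_1,x_2)$; hence the two cone metrics are bi-Lipschitz equivalent, the identity on the underlying set of $\Cone(X)$ realizing the equivalence.

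To embed the $d'$-cone, I would use the map of the preceding paragraph directly: regard $\R^m\subset\R^{m+1}$ with the extra unit basis vector $e_0$ and set $\Phi(tx)=t\,f(x)+(1-t)e_0$. This $\Phi$ is injective and sends the cone point to $e_0$, and the Euclidean metric $\rho$ it induces on $\Cone(X)$ satisfies~\eqref{cone001} with base metric $d'$: the $e_0$-coordinate gives $\rho(t_1x_1,t_2x_2)\ge\abs{t_1-t_2}$; for a fixed $x$ the images differ by $(t_1-t_2)(f(x)-e_0)$ and $\abs{f(x)-e_0}=\sqrt{\abs{f(x)}^2+1}\le\sqrt5$ because $0\in f(X)$ and $\diam f(X)\le 2$, so $\rho(t_1x,t_2x)\le\sqrt5\,\abs{t_1-t_2}$; and at a fixed height $\rho(tx_1,tx_2)=t\abs{f(x_1)-f(x_2)}=t\,d'(x_1,x_2)$. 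Lemma~\ref{cone00}, applied to $(X,d')$, then shows that $\rho$ is bi-Lipschitz equivalent to the cone metric~\eqref{mymetric} formed from $d'$, which by the previous paragraph is in turn bi-Lipschitz equivalent to $d_c$. Thus $\Phi$ is the required bi-Lipschitz embedding of $\Cone(X)$ into $\R^{m+1}$.

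The one point that I expect to deserve care is the reduction of the second paragraph: the hypothesis supplies an embedding only up to bi-Lipschitz distortion, whereas $\Cone(X)$ is attached to one fixed metric, so one must confirm that replacing $d$ by the pulled-back Euclidean metric $d'$ costs nothing. As noted, this is transparent from the explicit formula~\eqref{mymetric}, and I do not anticipate genuine difficulty elsewhere; the remainder is the verification of~\eqref{cone001} sketched above.
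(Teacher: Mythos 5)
Your proof is correct and is essentially the paper's own argument: normalize the embedding so that $0\in f(X)$ and $\diam f(X)\le 2$, map $tx\mapsto t\,f(x)+(1-t)e_0$ into $\R^{m+1}$, verify the three conditions~\eqref{cone001}, and invoke Lemma~\ref{cone00}. The only addition is that you make explicit the harmless passage from $d$ to the pulled-back metric $d'$ (and the resulting bi-Lipschitz equivalence of the two cone metrics via~\eqref{mymetric}), a step the paper handles implicitly by identifying $X$ with its image.
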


The relation of cones to symmetric products is based on the following construction, which goes back to~\cite{Sch}.
Let $\I=[0,1]$ and consider the spaces
\begin{equation}\label{special}
\I_*^{(n)}=\{A\in \I^{(n)}\colon 0,1\in A\}, \qquad n\ge 2.
\end{equation}
Note that $\I_*^{(n)}$ is an $(n-2)$-dimensional space: for example, $\I_*^{(2)}=\{\{0,1\}\}$ is a singleton and
$\I_*^{(3)}=\{\{0,t,1\}\colon 0\le t\le 1\}$ is a circle.
The space $\I_*^{(4)}$ is the well-known \emph{dunce hat} and for $n>4$ the spaces
 $\I_*^{(n)}$ could be called higher-dimensional dunce hats~\cite{AMS}.

\begin{lemma}\label{cone1} For $n\ge 2$ the space $\R^{(n)}$ is lipeomorphic to $\R\times  \Cone (\I_*^{(n)})$.
\end{lemma}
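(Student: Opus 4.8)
The plan is to write down an explicit lipeomorphism $\Phi\colon\R^{(n)}\to\R\times\Cone(\I_*^{(n)})$ built from the least element and the diameter of a set. For $A\in\R^{(n)}$ put $a=\min A$, $t=\diam A$. If $t>0$, then $\sigma(A):=t^{-1}(A-a)$ is a subset of $[0,1]$ containing $0$ and $1$, so $\sigma(A)\in\I_*^{(n)}$; set $\Phi(A)=\bigl(a,\,t\,\sigma(A)\bigr)$. If $t=0$ (so $A=\{a\}$), set $\Phi(A)=(a,*)$, $*$ being the cone point; this is legitimate since $\diam\I_*^{(n)}\le 1\le 2$. Then $\Phi$ is a bijection, with $\Phi^{-1}(c,tA')=c+tA'$ for $t>0$ and $\Phi^{-1}(c,*)=\{c\}$, and the task is to show that $\Phi$ and $\Phi^{-1}$ are Lipschitz.

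Write $a=\min A,\ t=\diam A,\ b=\min B,\ s=\diam B$. Using the product metric and \eqref{mymetric}, the distance from $\Phi(A)$ to $\Phi(B)$ is $\sqrt{|a-b|^2+\bigl(|t-s|+\min(t,s)\,\dH(\sigma(A),\sigma(B))\bigr)^2}$, which is comparable to $|a-b|+|t-s|+\min(t,s)\,\dH(\sigma(A),\sigma(B))$. Hence the lemma amounts to the estimate
\[
\dH(A,B)\ \approx\ |a-b|+|t-s|+\min(t,s)\,\dH(\sigma(A),\sigma(B)),
\]
which we prove assuming $t\ge s$, so that $\min(t,s)=s$. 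We may also assume $t>0$, the case $t=s=0$ being trivial; and if $s=0$ the last term vanishes, so $\sigma(B)$ plays no role. For the $\lesssim$ direction, split
\[
\dH(A,B)\le\dH\bigl(a+t\sigma(A),\,a+t\sigma(B)\bigr)+\dH\bigl(a+t\sigma(B),\,b+s\sigma(B)\bigr):
\]
the first summand equals $t\,\dH(\sigma(A),\sigma(B))\le s\,\dH(\sigma(A),\sigma(B))+(t-s)$ since $\dH(\sigma(A),\sigma(B))\le\diam\I_*^{(n)}\le 1$, and the second is at most $|a-b|+(t-s)$ because $|(a+tx')-(b+sx')|\le|a-b|+(t-s)$ for every $x'\in[0,1]$.

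The reverse direction is the heart of the matter. The bounds $|a-b|=|\min A-\min B|\le\dH(A,B)$ and $|t-s|=|\diam A-\diam B|\le 2\,\dH(A,B)$ follow straight from the definition of the Hausdorff distance. The remaining term $s\,\dH(\sigma(A),\sigma(B))$ is delicate because $\sigma$ divides by the diameter: a small move of a set of small diameter can change $\sigma$ drastically, and one must see that the multiplier $s$ cancels exactly this instability. To do so, introduce $\widetilde A=(s/t)(A-a)+b\subset\R$. Every $x\in A$ moves by $\bigl|(s/t)(x-a)+b-x\bigr|\le|s/t-1|\,\diam A+|a-b|=|s-t|+|a-b|$, so $\dH(\widetilde A,A)\le|s-t|+|a-b|$; and $\widetilde A=b+s\sigma(A)$, whence
\[
s\,\dH(\sigma(A),\sigma(B))=\dH(\widetilde A,B)\le\dH(\widetilde A,A)+\dH(A,B)\le\dH(A,B)+|s-t|+|a-b|.
\]
Summing the three bounds gives the right-hand side $\lesssim\dH(A,B)$, which finishes the proof. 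The main obstacle is precisely this last estimate; everything else is bookkeeping with the definitions.
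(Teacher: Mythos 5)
Your proposal is correct and follows essentially the same route as the paper: the same bijection $A\mapsto(\min A,\;\diam A\cdot\sigma(A))$, with the two-sided metric comparison established by the same rescaling-plus-triangle-inequality device (your auxiliary set $\widetilde A=b+s\,\sigma(A)$ is exactly the intermediate point $t_2x_1$ used in the paper's Lemma~\ref{cone00}, which the paper invokes instead of arguing directly). The only cosmetic difference is that you inline that lemma and merge the $\R$-factor into one estimate, rather than first splitting off $\R\times Z$ and then citing the abstract cone comparison.
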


\begin{proof} Let $Z=\{B\in \R^{(n)}\colon \min B = 0\}$.
Define the map $f\colon \R^{(n)}\to \R\times Z$ by sending
each set $A\in\R^{(n)}$ to $(\min A, A-\min A)$. It is evident that $f$ is Lipschitz, and so is
its inverse $(b,B)\mapsto B+b$. It remains to show that $Z$ is lipeomorphic to the cone
over $\I_*^{(n)}$.

Every set $B\in Z$ can be written as $tE$ with $E\in \I_*^{(n)}$ and $t=\max B$. 
This gives a bijection between $B$ and $\Cone(\I_*^{(n)})$.
It is easy to see that the Hausdorff metric $\dH$ on $B$ satisfies~\eqref{cone001}. Indeed,
$\dH(tE_1,tE_2)= t \,\dH(E_1,E_2)$ is trivial. To prove $\dH(t_1E_1,t_2E_2) \ge  \abs{t_1-t_2}$,
assume $t_1\ge t_2$ and observe that $\dist(t_1,t_2 E_2)=t_1-t_2$. Finally,
$\dH(t_1E,t_2E) \le \abs{t_1-t_2}$ because for every $x\in E$ the point $t_1x\in t_1E$ is within
distance $\abs{t_1-t_2}$ of the point $t_2x \in t_2E$. Thus, $Z$ is lipeomorphic to $\Cone (\I_*^{(n)})$.
\end{proof}

Combining Corollary~\ref{cone0} and Lemma~\ref{cone1} yields the following statement.

\begin{corollary}\label{cone9} If $\I_*^{(n)}$ admits a bi-Lipschitz embedding into $\R^m$, then
$\R^{(n)}$ admits a bi-Lipschitz embedding into $\R^{m+2}$.
\end{corollary}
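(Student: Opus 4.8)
The plan is simply to concatenate the two preceding results, together with an elementary observation about products. First I would check that $\Cone(\I_*^{(n)})$ even makes sense: since $\I_*^{(n)}$ consists of nonempty subsets of $\I=[0,1]$, any two of them lie within Hausdorff distance $1$, so $\diam \I_*^{(n)}\le 1\le 2$ and the cone construction of this section applies to it. Granting the hypothesis that $\I_*^{(n)}$ admits a bi-Lipschitz embedding into $\R^m$, Corollary~\ref{cone0} then supplies a bi-Lipschitz embedding of $\Cone(\I_*^{(n)})$ into $\R^{m+1}$.

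Next I would promote this to the product $\R\times \Cone(\I_*^{(n)})$. The point is that a Cartesian product of bi-Lipschitz embeddings is again bi-Lipschitz when both the domain and the target carry the Euclidean product metric: if $g_i\colon X_i\to \R^{k_i}$ is $L_i$-bi-Lipschitz for $i=1,2$, then $(x_1,x_2)\mapsto (g_1(x_1),g_2(x_2))$ is $\max(L_1,L_2)$-bi-Lipschitz from $X_1\times X_2$ into $\R^{k_1}\times\R^{k_2}=\R^{k_1+k_2}$. Taking $X_1=\R$ with $g_1=\mathrm{id}$ and $g_2$ the embedding from the previous step yields a bi-Lipschitz embedding of $\R\times \Cone(\I_*^{(n)})$ into $\R^{1+(m+1)}=\R^{m+2}$.

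Finally, Lemma~\ref{cone1} provides a lipeomorphism from $\R^{(n)}$ onto $\R\times\Cone(\I_*^{(n)})$; composing it with the embedding constructed above gives the desired bi-Lipschitz embedding of $\R^{(n)}$ into $\R^{m+2}$, with bi-Lipschitz constant bounded in terms of those of the three ingredients. There is no real obstacle here — the argument is pure bookkeeping. The only point deserving care is that the several metrics involved are mutually compatible: the Euclidean product metric on $\R\times Z$ used in Lemma~\ref{cone1}, the cone metric $d_c$ from~\eqref{mymetric}, and the standard metric on $\R^{m+2}$, all of which agree up to the conventions fixed at the start of this section.
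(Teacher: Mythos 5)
Your proposal is correct and follows exactly the paper's route: the paper states that Corollary~\ref{cone0} and Lemma~\ref{cone1} combine to give the result, and your write-up just spells out the (genuinely routine) product step and the diameter check that the paper leaves implicit.
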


\section{Bi-Lipschitz embeddings}

Let $\C$ denote the complex plane with the standard Euclidean metric. The following result is proved in~\cite{BIY} as the first step of the proof of Theorem~3.1. 

\begin{lemma}\label{prep1}~\cite{BIY} For $n\ge 2$ the set $\I_*^{(n)}$ admits a bi-Lipschitz embedding into $\C^{(n-1)}$.
\end{lemma}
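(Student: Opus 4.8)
The plan is to wrap the interval onto the unit circle, folding the two endpoints of $\I$ together. Let $\T=\{z\in\C\colon\abs{z}=1\}$, let $\omega\colon\I\to\T$ be $\omega(a)=e^{2\pi i a}$, and define
\[
\Phi\colon\I_*^{(n)}\to\C^{(n-1)},\qquad \Phi(A)=\omega(A)=\{e^{2\pi i a}\colon a\in A\}.
\]
Since every $A\in\I_*^{(n)}$ contains both $0$ and $1$, with $\omega(0)=\omega(1)=1$, and since $\omega$ is injective on $[0,1)$, the set $\Phi(A)$ has exactly $\abs{A}-1\le n-1$ elements, so $\Phi$ does land in $\C^{(n-1)}$ (indeed in subsets of $\T$); moreover $\Phi$ is injective because $A=\{0,1\}\cup\bigl(\omega^{-1}(\Phi(A))\cap(0,1)\bigr)$. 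For the Lipschitz upper bound, $\omega$ is $2\pi$-Lipschitz on $\I$, and postcomposition with an $L$-Lipschitz map cannot increase Hausdorff distance by more than the factor $L$; hence $\dH(\Phi(A),\Phi(B))\le 2\pi\,\dH(A,B)$.

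The substance is the lower bound, which I would extract from the pointwise identity: for any finite $B\subseteq\I$ with $0,1\in B$ and any $a\in\I$,
\[
\dist\bigl(e^{2\pi i a},\,\omega(B)\bigr)=2\sin\bigl(\pi\,\dist(a,B)\bigr).
\]
Writing $\delta$ for distance on the circle $\R/\mathbb Z$, one has $\abs{e^{2\pi i x}-e^{2\pi i y}}=2\sin\bigl(\pi\,\delta(x,y)\bigr)$; since $t\mapsto 2\sin(\pi t)$ is increasing on $[0,\tfrac12]$ while $\delta$ and $\dist(\cdot\,,B)$ take values in $[0,\tfrac12]$, the identity reduces to $\min_{b\in B}\delta(a,b)=\dist(a,B)$. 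The bound ``$\le$'' is trivial. For ``$\ge$'' put $r=\dist(a,B)$; then $0,1\in B$ forces $r\le\min(a,1-a)$, so for every $b\in B$ we have simultaneously $\abs{a-b}\ge r$ and $\abs{a-b}\le\max(a,1-a)=1-\min(a,1-a)\le 1-r$, whence $\delta(a,b)=\min\bigl(\abs{a-b},\,1-\abs{a-b}\bigr)\ge r$.

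Granting the identity, it remains to choose a point realizing the Hausdorff distance — say $a^*\in A$ with $\dist(a^*,B)=\dH(A,B)=:r$, the case of a point of $B$ being symmetric. Since $e^{2\pi i a^*}\in\Phi(A)$ and (again using $0,1\in B$) $r\le\tfrac12$,
\[
\dH(\Phi(A),\Phi(B))\ \ge\ \dist\bigl(e^{2\pi i a^*},\,\omega(B)\bigr)\ =\ 2\sin(\pi r)\ \ge\ 4r\ =\ 4\,\dH(A,B),
\]
where $\sin(\pi t)\ge 2t$ on $[0,\tfrac12]$ by concavity of $\sin$ (equality at $t=0$ and $t=\tfrac12$). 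Together with the upper bound this makes $\Phi$ a bi-Lipschitz embedding, in fact with multiplicative distortion at most $\pi/2$.

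The one genuinely delicate step is the pointwise identity. The map $\omega$ is badly non-bi-Lipschitz on $\I$ since it glues a neighborhood of $0$ to a neighborhood of $1$, and the reason the construction survives is precisely the defining constraint $0,1\in A$ built into $\I_*^{(n)}$: the endpoints anchor every image set at $1\in\T$ and forbid the short way around the circle from ever beating the straight-line distance inside $\I$. Dropping that constraint (that is, working in $\I^{(n)}$) breaks the argument, which is consistent with the fact that a different construction is needed there.
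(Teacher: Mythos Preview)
Your proof is correct and uses exactly the same map as the paper (and the cited source~\cite{BIY}): wrapping $\I$ onto the unit circle via $t\mapsto e^{2\pi i t}$, which collapses $0$ and $1$ and hence sends $\I_*^{(n)}$ into $\C^{(n-1)}$. The paper only sketches this, while you supply the quantitative details; your pointwise identity $\dist(e^{2\pi ia},\omega(B))=2\sin(\pi\,\dist(a,B))$ and the resulting distortion bound $\pi/2$ are correct, the key observation being that the constraint $0,1\in B$ forces $\min_{b\in B}\delta(a,b)=\dist(a,B)$.
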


The proof of Lemma~\ref{prep1} in~\cite{BIY} proceeds by mapping $\I$ onto the unit circle in $\C$ via $t\mapsto \exp(2\pi it)$. Since both $0$ and $1$ are mapped to the same point, this embeds $\I_*^{(n)}$ into $\C^{(n-1)}$.

The next step is to embed  $\C^{(n-1)}$ into the Cartesian product of several copies of $\R^{(n-1)}$.
Actually, we will prove a more general result.

\begin{lemma}\label{tomo} For $n,d\ge 2$ there exists a bi-Lipschitz embedding
\begin{equation}\label{tomores}
g\colon (\R^d)^{(n-1)}\to \underbrace{(\R^{d-1})^{(n-1)}\times \dots \times (\R^{d-1})^{(n-1)}}_{n \text{ times}}
\end{equation}
\end{lemma}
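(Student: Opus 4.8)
The plan is to build $g$ from a family of $n$ affine surjections $\R^d\to\R^{d-1}$ chosen so that no two of them can be confused at a single point; this is a quantitative, bi-Lipschitz form of the classical fact that a point set of small cardinality is reconstructible from finitely many of its projections. Write a point of $\R^d$ as $x=(x',x_d)$ with $x'\in\R^{d-1}$ and $x_d\in\R$, fix pairwise distinct vectors $w_1,\dots,w_n\in\R^{d-1}$ (for instance $w_j=(j,0,\dots,0)$), and put $\delta=\min_{i\ne j}\abs{w_i-w_j}>0$ and $M=\max_j\abs{w_j}$. Define $P_j\colon\R^d\to\R^{d-1}$ by $P_j(x)=x'-x_dw_j$ and set $g(A)=(P_1(A),\dots,P_n(A))$, where $P_j(A)=\{P_j(a):a\in A\}$. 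Since $\abs{P_j(A)}\le\abs{A}\le n-1$, each coordinate of $g(A)$ is a legitimate element of $(\R^{d-1})^{(n-1)}$.

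First I would dispose of the upper Lipschitz bound. Each $P_j$ is affine with $\abs{P_j(x)-P_j(y)}\le\sqrt{1+M^2}\,\abs{x-y}$, and any $L$-Lipschitz map between metric spaces induces an $L$-Lipschitz map of their symmetric products with the Hausdorff metric: if $\dH(A,B)\le r$, then for each $a\in A$ there is $b\in B$ with $\abs{a-b}\le r$, hence $\abs{P_j(a)-P_j(b)}\le Lr$, and symmetrically. Thus $\dH(P_j(A),P_j(B))\le\sqrt{1+M^2}\,\dH(A,B)$ for every $j$, and with the Euclidean product metric on the target, $d(g(A),g(B))\le\sqrt{n(1+M^2)}\,\dH(A,B)$.

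The core of the argument is the lower bound, which also yields injectivity. Let $A,B$ have $\abs{A},\abs{B}\le n-1$ and set $\varepsilon=d(g(A),g(B))$, so $\dH(P_j(A),P_j(B))\le\varepsilon$ for all $j$. Fix $a=(a',a_d)\in A$. For each $j\in\{1,\dots,n\}$ pick $b^{(j)}\in B$ with $\abs{P_j(a)-P_j(b^{(j)})}\le\varepsilon$. Since $\abs{B}\le n-1<n$, two indices $j_1\ne j_2$ give the same point $b=(b',b_d)\in B$. The two estimates $\abs{(a'-b')-(a_d-b_d)w_{j_i}}\le\varepsilon$, $i=1,2$, subtract to $\abs{(a_d-b_d)(w_{j_1}-w_{j_2})}\le2\varepsilon$, so $\abs{a_d-b_d}\le2\varepsilon/\delta$ (here $w_{j_1}\ne w_{j_2}$, so a scalar multiple of $w_{j_1}-w_{j_2}$ vanishes only when the scalar does), and then $\abs{a'-b'}\le\varepsilon+2M\varepsilon/\delta$. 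Therefore $\abs{a-b}\lesssim\varepsilon$, so $A$ lies in a $C\varepsilon$-neighbourhood of $B$; interchanging $A$ and $B$ gives the reverse inclusion, whence $\dH(A,B)\le C\varepsilon=C\,d(g(A),g(B))$. Taking $\varepsilon=0$ forces $A=B$, so $g$ is injective, and combined with the upper bound it is a bi-Lipschitz embedding.

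I expect the only delicate point to be the pigeonhole step, which is also where the shape of the statement comes from: to pin down a point of $A$ one needs it to agree with the \emph{same} point of $B$ under two different projections, which requires the $w_j$ to be distinct and requires $n$ factors rather than $n-1$, since $n-1$ points of $B$ cannot supply $n$ distinct partners. No genericity beyond distinctness of the $w_j$ is used, so the bi-Lipschitz constant stays explicit, depending only on $n$, $\delta$, and $M$.
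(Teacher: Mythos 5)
Your proof is correct and follows essentially the same route as the paper: project onto $n$ hyperplanes along $n$ distinct directions, get the upper bound from the Lipschitz functoriality of symmetric products, and get the lower bound by pigeonholing the at most $n-1$ points of $B$ against the $n$ projections so that some single $b\in B$ matches $a$ under two different projections, which pins $a-b$ down to size $O(\varepsilon)$. The only cosmetic difference is that you use explicit oblique projections $x\mapsto x'-x_dw_j$ and compute the constant directly, where the paper uses orthogonal projections and packages the same estimate as the boundedness of the intersection of two cylinders around distinct lines.
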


The embedding $g$ is obtained by projecting finite subsets of $\R^d$ onto $n$ hyperplanes in generic position. This idea is not new. R\'enyi and Haj\'os proved that every $(n-1)$-point subset of the plane is uniquely determined by its projections onto $n$ lines~\cite{Re}. This result was extended to higher dimensions by
Heppes~\cite{He}.
Subsequently, the problem of recovering finite sets from projections was extensively studied in the subject of discrete tomography~\cite{BL,BD,Gbook,GG}.
One should note, however, that in the aforementioned works projections are taken with multiplicities,
while symmetric products are multiplicity-blind.

\begin{proof} Fix $n$ distinct lines $L_1,\dots, L_n $ in $\R^d$. Let $g_j$ be the orthogonal projection
onto the orthogonal complement of $L_j$. The map $g_j$ induces a $1$-Lipschitz
map from $(\R^d)^{(n-1)}$ to $\R^{(n-1)}$, also denoted $g_j$. The product map $g=(g_1,\dots,g_{n})$ is also Lipschitz. It remains to prove the lower distance bound for $g$.

For $r>0$ let $T_j(r)$ be the open $r$-neighborhood of the line $L_j$, i.e., an open circular cylinder
of radius $r$. Since the lines $L_j$ are distinct, the intersections $T_j(r)\cap T_k(r)$, $j\ne k$, are bounded
sets. Let $M$ be a number such that
\begin{equation}\label{intersect}
T_j(r)\cap T_k(r) \subset \{x\colon \abs{x}<Mr\} \quad r>0, \ j\ne k.
\end{equation}

Consider distinct sets $A,B\in (\R^d)^{(n-1)}$ and let $\rho$ be the Hausdorff distance between them.
Suppose that the Hausdorff distance between $g_j(A)$ and $g_j(B)$ is less than $\rho/M$ for all $j$. This will lead to a contradiction, completing the proof.

Interchanging $A$ and $B$ if necessary, we may assume that there exists a point $a\in A$ such that
\begin{equation}\label{far1}
\abs{a-b}\ge \rho \quad \text{ for all } \ b\in B.
\end{equation}
On the other hand, $\dist(g_j(a),g_j(B))<\rho/M$, which implies that there exists a point $b\in B$
such that $b-a\in T_j(\rho/M)$. We have $n$ cylinders $T_j(\rho/M)$, while the cardinality of $B$ is at most $n-1$.  It follows that for some $b\in B$ the point $b-a$ lies in the intersection of two cylinders.
From~\eqref{intersect} we have $\abs{b-a}<\rho$, which contradicts~\eqref{far1}.
\end{proof}

\begin{proof}[Proof of Theorem~\ref{embed}] We proceed by induction on $n$. The base case $n=1$ is trivial,
since $\R^{(1)}$ is isometric to $\R$. Suppose that $\R^{(n-1)}$ admits a bi-Lipschitz embedding into $\R^{m}$
where
\[
m=2\,\lfloor (e-1)\,(n-1)! \rfloor = 2\, (n-1)!\sum_{k=1}^{n-1}\frac{1}{k!}.
\]
Lemmas~\ref{prep1} and~\ref{tomo} imply that $\I_*^{(n)}$ admits a bi-Lipschitz embedding into $\R^{nm}$.
By Corollary~\ref{cone9},  $\R^{(n)}$ admits a bi-Lipschitz embedding into $\R^{nm+2}$. It remains
to observe that
\[
nm+2 = 2+ 2\,n!\sum_{k=1}^{n-1}\frac{1}{k!} = 2\,n!\sum_{k=1}^{n}\frac{1}{k!} = 2\,\lfloor (e-1)\,n! \rfloor. \qedhere
\]
\end{proof}

\begin{proof}[Proof of Theorem~\ref{embed2}] Repeated application of Lemma~\ref{tomo} yields a bi-Lips\-chitz embedding of $(\R^d)^n$ into the Cartesian product of $(n+1)^{d-1}$ copies of $\R^{(n)}$. 
It remains to apply Theorem~\ref{embed}. 
\end{proof} 

\section{Lipschitz retractions}

A subset $Y$ of a metric space $X$ is a \emph{Lipschitz retract} of $X$ if there exists a Lipschitz map
$f\colon X\to Y$ that fixes $Y$ pointwise. A metric space is an \emph{absolute Lipschitz retract} if it is a Lipschitz
retract of any metric space containing it.

For any metric space $X$ and any positive integers $k<n$ we have a natural inclusion $X^{(k)}\subset X^{(n)}$. In general
$X^{(k)}$ is not a Lipschitz (or even topological) retract of $X^{(n)}$. For example, if $X$ is the circle $S^1$,
then $X^{(3)}$ is homeomorphic to $S^3$~\cite{Bo} which, being simply connected, does not retract onto $X^{(1)}=S^1$. This suggests a potentially interesting problem. 

\begin{problem} Characterize the metric spaces $X$ such that $X^{(k)}$ is a Lipschitz retract of $X^{(n)}$ whenever $k<n$.
\end{problem}

The following lemma shows that the line $\R$ and its subintervals are among such spaces. Its proof relies on the tree structure of $\R$ and does not immediately extend to $\R^d$.

\begin{lemma}\label{lr} Let $X$ be a nonempty connected subset of $\R$. Then for any integers  $1\le k< n$ there is a Lipschitz retraction $r\colon X^{(n)} \to X^{(k)}$.
\end{lemma}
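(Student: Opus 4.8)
The plan is to reduce, via composition, to the one‑step case. Since a composition of Lipschitz retractions is again a Lipschitz retraction, it suffices to produce Lipschitz retractions $r_j\colon X^{(j)}\to X^{(j-1)}$ for $k<j\le n$ and then set $r=r_{k+1}\circ\dots\circ r_n$. A connected $X\subseteq\R$ is an interval, so every $A\in X^{(j)}$ has the form $\{a_1<\dots<a_i\}$ with $i=\abs{A}\le j$; all the midpoint and betweenness operations below keep the image inside $X$, and each $r_j$ will be the identity on $X^{(j-1)}\subset X^{(j)}$, so $r$ fixes $X^{(k)}$.

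Two low targets are immediate and fix the pattern. For $X^{(1)}$ one takes $A\mapsto\{(\min A+\max A)/2\}$, and for $X^{(2)}$ one takes $A\mapsto\{\min A,\max A\}$ directly (for any $n$); both are $1$‑Lipschitz because $A\mapsto\min A$ and $A\mapsto\max A$ are $1$‑Lipschitz for the Hausdorff metric, and both fix their target. For the step $r_3\colon X^{(3)}\to X^{(2)}$ I would use the explicit formula
\[
r_3(\{a<b<c\})=\bigl\{\,a+\min(b-a,\,c-b),\ \ a+\max(b-a,\,c-b)\,\bigr\},\qquad r_3\big|_{X^{(2)}}=\mathrm{id}.
\]
The middle point $b$ is not itself Lipschitz in $A$ for the Hausdorff metric, but this expression is symmetric in the two gaps $b-a$ and $c-b$, so $r_3$ depends on $A$ only through $\min A$ and the unordered pair of gaps, each of which is Lipschitz in $A$; hence $r_3$ is Lipschitz. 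One checks that $r_3(\{0,1,2\})=\{1\}$, that $r_3$ fixes two‑point sets (take the middle point equal to an endpoint), and that the piecewise definition is continuous across that stratum.

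The general step $r_j\colon X^{(j)}\to X^{(j-1)}$ follows the same idea — ``merge the smallest consecutive gap'' — but it cannot be carried out literally: the map ``replace the two endpoints of the smallest gap by their midpoint'' is discontinuous at configurations with several (near‑)minimal gaps. One must instead send $A$ to a suitable weighted combination of the $j-1$ candidate merges, the weights strongly favouring the smaller gaps, and do so carefully: a merge happening inside one small cluster of $A$ has to be accompanied by a controlled, proportional contraction of the other clusters, so that, for instance, $\{0,\varepsilon,5,5+\varepsilon\}$ is retracted near $\{0,5\}$ and not near $\{0,2.5,5\}$. Moreover the combination must be taken so as to respect the Hausdorff metric rather than coordinate‑wise on sorted tuples, since two candidate merges can be close as sets while being far apart as ordered tuples.

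The main obstacle is the uniform Lipschitz estimate for $r_j$: one has to organize the bound across all of the strata of $X^{(j)}$ cut out by coincidences $a_l=a_{l+1}$, and in particular control the degenerate limits in which several gaps vanish simultaneously. This bookkeeping is exactly where the linear order — the ``tree structure'' — of $\R$ is used: it equips every finite subset with a canonical sequence of consecutive gaps, hence with a canonical nested family of clusters, on which the estimates can be run and matched up along the strata. In $\R^d$ there is no such canonical ordering of a finite set, which is why the argument does not transfer.
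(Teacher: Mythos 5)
Your reduction to a single step $X^{(j)}\to X^{(j-1)}$ is exactly the paper's first move, and your $r_3$ is correct: writing it as the two-point set $\{\min A+\delta(A),\,\max A-\delta(A)\}$, where $\delta(A)$ is the minimal gap, shows it is Lipschitz because $\min A$, $\max A$ and $\delta(A)$ all are. But for $j\ge 4$ — the only nontrivial content of the lemma, since $\R^{(3)}$ is already well understood — you do not give a construction. You describe what ``one must'' do (a weighted combination of the $j-1$ candidate merges, with proportional contraction of the other clusters, organized across strata), and you correctly identify the obstruction (naive merging of the smallest gap is discontinuous at ties, and ``averaging'' candidate sets is not a well-defined Lipschitz operation in the Hausdorff metric), but identifying the obstruction is not the same as overcoming it. As written, the general step is a plan, not a proof, and it is precisely the step where the lemma lives.

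For comparison, the paper's resolution avoids weighted combinations entirely and is a single global formula: fix an origin $0\in X$, order $A=\{a_1<\dots<a_n\}$, set $\delta(A)$ to be the minimal gap, and pull each point toward $0$ by an index-dependent multiple of $\delta(A)$, clamped at $0$ (namely $a_j\mapsto \max(0,a_j-j\delta)$ for $a_j>0$ and $a_j\mapsto\min(0,a_j+(n-j)\delta)$ for $a_j\le 0$). The two points realizing the minimal gap automatically collide, so the image has at most $n-1$ points, and the map is the identity when $\abs{A}<n$ since then $\delta(A)=0$. The Lipschitz bound splits into just two cases: if $\delta(A)\lesssim \dH(A,B)$ one uses $\dH(A,r(A))\le n\,\delta(A)$ and the triangle inequality; if $\delta(A)\gg \dH(A,B)$ the sorted elements of $A$ and $B$ pair up, and one compares $a_j'$ with $b_j'$ directly using $\abs{\delta(A)-\delta(B)}\le 2\,\dH(A,B)$. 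This sidesteps all the stratification bookkeeping you anticipate; your test case $\{0,\varepsilon,5,5+\varepsilon\}$ is sent to $\{0,5-3\varepsilon\}$, as desired. To complete your argument you would need to supply an explicit $r_j$ of comparable concreteness together with its uniform Lipschitz estimate; in its current form the proposal has a genuine gap there.
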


\begin{proof} We may assume that $0\in X$, via translation.
It suffices to consider $k=n-1$, from which the general case follows by induction.
Given a set $A\subset X$ of cardinality $n$, order its elements $a_1<\dots<a_n$ and let
$\delta(A)=\min \{a_j-a_{j-1}\colon j=2,\dots n\}$.  For $j=1,\dots,n$ let
\[
a_j' = \begin{cases}  \min (0, a_j+(n-j)\delta) \quad &\text{if } a_j\le 0 \\
\max (0, a_j-j\delta) \quad &\text{if } a_j > 0
\end{cases}
\]
By construction, $a_1'\le \dots \le a_n'$ and at least two of these numbers are equal, i.e., the pair that realizes
the minimal distance $\delta(A)$.
We set $r(A)=\{a_1',\dots, a_n'\}$. For sets $A\subset X$ of cardinality less than $n$, define
$\delta(A)=0$ and $r(A)=A$.

To prove that $r$ is Lipschitz, we fix $A,B\in X^{(n)}$. The definitions of $\delta$ and $r$ imply
\begin{equation}\label{lr0}
\abs{\delta(A)-\delta(B)}  \le 2\,\dH(A,B)
\end{equation}
and
\begin{equation}\label{lr1}
\dH(A,r(A)) \le \max_j \abs{a_j-a_j'} \le  n\,\delta(A).
\end{equation}

\emph{Case 1}: $\delta(A)\le 2\,\dH(A,B)$. Then $\delta(B)\le 4\,\dH(A,B)$ by~\eqref{lr0}. From the triangle
inequality and~\eqref{lr1} it follows that
\[
\dH(r(A),r(B))\le \dH(A,B)+n\,\delta(A)+n\,\delta(B) \le (6n+1)\,\dH(A,B).
\]

\emph{Case 2}: $\delta(A)> 2\,\dH(A,B)$. Then $\delta(B)>0$ by~\eqref{lr0}. Order the elements of each set as
$a_1<\dots <a_n$ and $b_1<\dots < b_n$. Observe that the intervals 
\[ \left\{x\colon \abs{x-a_j} \le \dH(A,B) \right\},\quad j=1,\dots,n \]
are disjoint, and each of them contains an element of $B$. Therefore, $\abs{a_j-b_j}\le \dH(A,B)$ for all $j$. If $a_j$ and $b_j$ have opposite signs, then $\abs{a_j'-b_j'}\le \abs{a_j-b_j}$ by definition. If $a_j,b_j\ge 0$, then
\begin{equation*}
\begin{split}
\abs{a_j'-b_j'}  & \le 
\abs{(a_j-j\delta(A))- (b_j-j\delta(B))} \le \abs{a_j-b_j}+n\abs{\delta(A)-\delta(B)}  \\
& \le (2n+1)\,\dH(A,B).
\end{split}
\end{equation*} 
The case $a_j,b_j\le 0$ is treated the same way. We conclude that 
\[\dH(r(A),r(B))\le \max_{j} \abs{a_j'-b_j'}\le  (2n+1)\,\dH(A,B). \qedhere \]
\end{proof}

The following Lipschitz decomposition lemma is similar to Proposition~1.6 in~\cite{DLS}.
The subject of~\cite{DLS} is unordered $n$-tuples rather than subsets, but this has little effect on the proof.

\begin{lemma}\label{lipdecomp} Let $X$ and $Y$ be metric spaces, $\diam X=D<\infty$.
Suppose that $f\colon X\to Y^{(n)}$ is an $L$-Lipschitz function such that
\begin{equation}\label{diam}
\diam f(x_0)> 3LD(n-1) \quad \text{ for some }\ x_0\in X.
\end{equation}
Then there are $L$-Lipschitz functions $g,h\colon X\to Y^{(n-1)}$ such that $f(x)=g(x)\cup h(x)$ for all $x\in X$.
\end{lemma}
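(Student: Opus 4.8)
The plan is to split $f(x)$ according to proximity to the two "extreme" points of the large-diameter set $f(x_0)$. Concretely, since $\diam f(x_0) > 3LD(n-1)$, choose $p, q \in f(x_0)$ with $|p - q| = \diam f(x_0)$ (in general metric spaces, pick $p,q$ realizing a distance close enough to the diameter; the slack is absorbed by the strict inequality). For each $x \in X$, define
\[
g(x) = \{ y \in f(x) \colon d(y,p) \le d(y,q) \}, \qquad h(x) = \{ y \in f(x) \colon d(y,q) \le d(y,p) \}.
\]
Clearly $f(x) = g(x) \cup h(x)$. The first thing to check is that each of $g(x)$, $h(x)$ is a proper nonempty subset of $f(x)$, hence lies in $Y^{(n-1)}$: it is nonempty because $f(x)$ is, and it omits at least one point of $f(x)$ because, by the Lipschitz bound, $f(x)$ contains a point within $LD$ of $p$ and a point within $LD$ of $q$, and these cannot both satisfy the same defining inequality once $|p-q|$ exceeds $2LD$ — which it does, since $3LD(n-1) \ge 2LD$ for $n \ge 2$ (the case $n=1$ being vacuous as $Y^{(0)}$ is empty). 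I would spell this out carefully, as it is where the hypothesis~\eqref{diam} first gets used.

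The main work is the $L$-Lipschitz estimate for $g$ (and symmetrically $h$). Fix $x_1, x_2 \in X$ and write $\rho = \dH(f(x_1), f(x_2)) \le L\, d_X(x_1,x_2)$. To bound $\dH(g(x_1), g(x_2))$ by $\rho$, take $y \in g(x_1)$; there is $y' \in f(x_2)$ with $d(y,y') \le \rho$, and we need a point of $g(x_2)$ within $\rho$ of $y$. If $y' \in g(x_2)$ we are done. Otherwise $y'$ is strictly closer to $q$ than to $p$, while $y$ is at least as close to $p$ as to $q$; since $d(y,y') \le \rho$, this "boundary crossing" forces $y$ to lie within distance $\approx \rho$ of the bisecting region between $p$ and $q$. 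The point is that near this bisector, $f(x_1)$ itself cannot have a point of $g(x_1)$ that is far (in the Hausdorff sense we need) from $g(x_2)$, because the diameter hypothesis guarantees $g(x_1)$ and $h(x_1)$ straddle a genuinely large gap, so the element $y$ realizing the crossing is already close to $f(x_2)$ on both sides. Making this quantitative is the crux: the hypothesis $\diam f(x_0) > 3LD(n-1)$ should be used — via the Lipschitz bound, $\diam f(x) > 3LD(n-1) - 2LD > 0$ for every $x$ — to show that $g(x)$ and $h(x)$ are separated enough that the partition is "locally constant up to the obvious movement," so that transferring $y$ to $g(x_2)$ costs only $\rho$ and not more.

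I expect the delicate point to be precisely this last estimate: controlling what happens to points of $f(x)$ that sit near the $p$–$q$ bisector, where membership in $g$ versus $h$ is unstable. The safeguard is that by~\eqref{diam} and the $L$-Lipschitz property, for every $x$ the set $f(x)$ has two points separated by more than $3LD(n-1) - 2LD$, so a pigeonhole argument on the at most $n-1$ "gaps" in $f(x)$ shows some consecutive gap (along the $p$–$q$ direction) exceeds $2LD \ge 2\rho$ (taking $d_X(x_1,x_2) \le D$); this gap is wide enough that no point within $\rho$ of it can jump sides, which pins down where the partition can change and yields the bound $\dH(g(x_1),g(x_2)) \le \rho$. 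Running the symmetric argument with the roles of $x_1,x_2$ reversed completes the Hausdorff estimate for $g$, and the identical argument gives it for $h$. The reference to~\cite{DLS} indicates this scheme is essentially known in the $n$-tuple setting; the only adaptation is that unions replace multiset sums, which does not affect the diameter-splitting mechanism.
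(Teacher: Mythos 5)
Your splitting by the bisector of a diametral pair $p,q\in f(x_0)$ does not work, and the difficulty you flag at the end is not a removable technicality. The condition $d(y,p)\le d(y,q)$ partitions $Y$ along a fixed locus, and nothing prevents points of $f(x)$ from sitting on that locus and crossing it as $x$ varies; when that happens $g$ jumps. Concretely, take $Y=\R$, $n=3$, $X=[0,D]$, and $f(x)=\{0,\,a(x),\,6LD+\epsilon\}$ with $a$ an $L$-Lipschitz function whose values cross the midpoint $3LD+\epsilon/2$. Then $f$ is $L$-Lipschitz and $\diam f(x_0)=6LD+\epsilon>3LD(n-1)$, but your $g(x)$ equals $\{0,a(x)\}$ on one side of the crossing and $\{0\}$ on the other, so $\dH(g(x_1),g(x_2))\approx 3LD$ for $x_1,x_2$ arbitrarily close: $g$ is not even continuous. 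Your proposed repair --- pigeonholing the gaps of $f(x)$ ``along the $p$--$q$ direction'' --- does not save it: there is no such direction in a general metric space $Y$, and even in $\R$ the wide gap guaranteed by pigeonhole need not contain the bisector. In the example the offending point $a(x)$ sits exactly at the bisector, adjacent to two gaps each wider than $2LD$, and still jumps sides.

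The correct move is to let the cut depend on the cluster structure of $f(x_0)$ rather than on a fixed pair of its points. The paper chooses a maximal $E\subset f(x_0)$ with $\diam E<3LD(\abs{E}-1)$; maximality forces $\dist(y,E)>3LD$ for every $y\in f(x_0)\setminus E$, so the $LD$-neighborhoods $G$ of $E$ and $H$ of $f(x_0)\setminus E$ satisfy $\dist(G,H)>LD$. Since every $f(x)$ lies within Hausdorff distance $LD$ of $f(x_0)$, the sets $g(x)=f(x)\cap G$ and $h(x)=f(x)\cap H$ are nonempty proper pieces of $f(x)$, and no point of any $f(x)$ can lie within $\rho\le LD$ of both $G$ and $H$; hence a nearest point in $f(x_2)$ to a point of $g(x_1)$ automatically lies in $g(x_2)$. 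That separation of the two pieces by an empty buffer of width greater than $LD$ is exactly the stability your bisector construction lacks. If you want to keep the two-point idea, you must at least fatten $\{p\}$ and $\{q\}$ into well-separated subclusters of $f(x_0)$, which is precisely what the maximality argument accomplishes.
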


\begin{proof} Following~\cite{DLS}, we consider the family $\mathcal{S}$ of all sets $E\subset f(x_0)$ such that  $\diam E<3LD(\abs{E}-1)$. Ordered by inclusion, $\mathcal{S}$ has maximal elements. Choose and fix such a maximal set $E$, and note that $E$ is a proper
subset of $f(x_0)$. The maximality of $E$ implies that
\begin{equation}\label{dicho}
\dist(y,  E) > 3LD \quad \text{ for all } \ y\in f(x_0)\setminus E
\end{equation}
for otherwise $E\cup \{f(x_0\}$ would be in $\mathcal{S}$. Let
\[
G = \{y\in Y\colon \dist(y,E)\le LD\};  \quad H = \{y\in Y\colon \dist(y,f(x_0)\setminus E)\le LD\}.
\]
We claim that the functions $g(x)=f(x)\cap G$ and $h(x)=f(x)\cap H$ have the desired properties.

Indeed, for every $x\in X$ the set $f(x)$ is within Hausdorff distance $LD$ of $f(x_0)$. It follows that
$f(x)\subset G\cup H$ and both intersections $f(x)\cap G$ and $f(x)\cap H$ are nonempty. This implies
$g(x),h(x)\in Y^{(n-1)}$. To check the Lipschitz property, take $x_1,x_2\in X$ and let $\rho=\dH(f(x_1),f(x_2))$. Since  
$\rho \le LD < \dist(G,H)$, every point of $f(x_1)\cap G$ must be within distance $\rho$ of $f(x_2)\cap G$, and vice versa. 
Hence  
\[
\dH(g(x_1),g(x_2)) \le \rho \le L\,d_X(x_1,x_2) 
\]
and the same applies to $h$. 
\end{proof}

The following Lipschitz homotopy lemma parallels Lemma~1.8 in~\cite{DLS}. 
It  says that the metric space $\R^{(n)}$ is \emph{Lipschitz $k$-connected} for all $k=1,2,\dots$, that is, all Lipschitz homotopy groups of $\R^{(n)}$ are trivial.
Here the difference between $\R^{(n)}$ and the space of unordered $n$-tuples considered in~\cite{DLS} is more significant: it requires an appeal to the Lipschitz retraction in Lemma~\ref{lr}.

\begin{lemma}\label{homlem} For each integer $n\ge 1$ there is a constant $C_n$ such that for any closed 
ball $B\subset \R^k$ ($k=1,2,\dots$) and any $L$-Lipschitz map $f\colon \partial B\to \R^{(n)}$, there is a $C_nL$-Lipschitz  map $\widetilde{f}\colon B\to \R^{(n)}$ that agrees with $f$ on $\partial B$. 
\end{lemma}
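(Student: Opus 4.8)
The plan is to argue by induction on $n$, with Lemmas~\ref{lr} and~\ref{lipdecomp} as the two main tools. The base case $n=1$ is immediate, since $\R^{(1)}$ is isometric to $\R$ and the McShane formula $\widetilde f(x)=\inf_{y\in\partial B}\bigl(f(y)+L\abs{x-y}\bigr)$ gives an $L$-Lipschitz extension $B\to\R$; thus $C_1=1$. For the inductive step fix $n\ge 2$ and set $D=\diam B$ (so $\diam\partial B=D$ as well). The argument proceeds by a dichotomy: either some value $f(x_0)$ has diameter greater than $3LD(n-1)$, or every value $f(x)$ has diameter at most $3LD(n-1)$. These alternatives are exhaustive, and in the second one the Lipschitz bound on the compact domain $\partial B$ confines the whole image to a short interval.

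\emph{Large-diameter case.} If $\diam f(x_0)>3LD(n-1)$ for some $x_0\in\partial B$, apply Lemma~\ref{lipdecomp} with $X=\partial B$ and $Y=\R$ to write $f=g\cup h$, where $g,h\colon\partial B\to\R^{(n-1)}$ are $L$-Lipschitz. By the inductive hypothesis they extend to $C_{n-1}L$-Lipschitz maps $\widetilde g,\widetilde h\colon B\to\R^{(n-1)}$, and the pointwise union $F=\widetilde g\cup\widetilde h$ is $C_{n-1}L$-Lipschitz because $\dH(A_1\cup B_1,A_2\cup B_2)\le\max\bigl(\dH(A_1,A_2),\dH(B_1,B_2)\bigr)$. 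A priori $F$ lands only in $\R^{(2n-2)}$; this is where the structure of the line is essential. Lemma~\ref{lr} provides a Lipschitz retraction $r\colon\R^{(2n-2)}\to\R^{(n)}$ (needed only when $n\ge 3$; for $n=2$ one has $2n-2=n$), and we set $\widetilde f=r\circ F$. On $\partial B$ the map $F$ equals $g\cup h=f\in\R^{(n)}$, which $r$ fixes, so $\widetilde f$ restricts to $f$; and $\operatorname{Lip}\widetilde f\le(\operatorname{Lip} r)\,C_{n-1}L$.

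\emph{Small-diameter case.} If $\diam f(x)\le3LD(n-1)$ for all $x$, fix $x_*\in\partial B$; since $\dH(f(x),f(x_*))\le LD$ for every $x$, the union $\bigcup_{x\in\partial B}f(x)$ is contained in an interval of length $\ell\le\diam f(x_*)+2LD\le LD(3n-1)$. Pick a point $p$ of that interval, say $p=\min f(x_*)$. Write $B=B(c,R)$ and, for $x\ne c$, $x=c+s_x(y_x-c)$ with $y_x\in\partial B$ and $s_x=\abs{x-c}/R\in[0,1]$. Define
\[
\widetilde f(x)=\bigl\{\,p+s_x(a-p)\colon a\in f(y_x)\,\bigr\}\quad(x\ne c),\qquad\widetilde f(c)=\{p\}.
\]
Then $\widetilde f$ restricts to $f$ on $\partial B$ and takes values in $\R^{(n)}$. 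To estimate $\operatorname{Lip}\widetilde f$ I would split the displacement from $x$ to $x'$ into a radial part and an angular part at the smaller of the two radii and invoke the polar identity $\abs{x-x'}^2=(s_x-s_{x'})^2R^2+s_xs_{x'}\abs{y_x-y_{x'}}^2$: the radial part moves $\widetilde f$ by at most $\abs{s_x-s_{x'}}\ell\le(\ell/R)\abs{x-x'}$, the angular part by at most $\min(s_x,s_{x'})\,\dH(f(y_x),f(y_{x'}))\le L\abs{x-x'}$, and since $\ell/R\le2L(3n-1)$ we get $\operatorname{Lip}\widetilde f\le(6n-1)L$.

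Setting $C_n=\max\bigl\{(\operatorname{Lip} r)\,C_{n-1},\,6n-1\bigr\}$ closes the induction, the factor $\operatorname{Lip} r$ being a finite product of the single-step Lipschitz constants from Lemma~\ref{lr} and so depending only on $n$. I expect the large-diameter case to be the main obstacle. The difficulty is that once $f$ is decomposed and the pieces extended, their supports in the interior of $B$ may overlap only partially, so $\widetilde g(x)\cup\widetilde h(x)$ can have as many as $2n-2$ points: a direct gluing would produce a map into $\R^{(2n-2)}$ rather than $\R^{(n)}$. It is precisely the Lipschitz retraction of Lemma~\ref{lr}, which rests on the tree structure of $\R$, that repairs this, and it is also the sole step where the proof does not carry over to $(\R^d)^{(n)}$. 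What remains is routine: the union inequality for the Hausdorff metric, the two estimates in the small-diameter case, and checking that the constants close up.
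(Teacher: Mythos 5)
Your proof is correct and follows essentially the same route as the paper: the same dichotomy on $\diam f(x_0)$, with Lemma~\ref{lipdecomp}, the inductive hypothesis, and the retraction of Lemma~\ref{lr} handling the large-diameter case, and a radial cone/scaling construction handling the small-diameter case. The only differences are cosmetic (you scale toward $p=\min f(x_*)$ instead of translating so that $0\in f(x_0)$, and you track explicit constants), so nothing further is needed.
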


\begin{proof} We may assume that $B$ is the unit ball $\{x\in\R^k\colon \abs{x}\le 1\}$. 
As in~\cite{DLS} we proceed by induction on $n$, the case $n=1$ being well-known. Assuming the lemma proved for all
$n'<n$, we pick $x_0\in \partial B$ and consider two cases. 

\emph{Case 1}: The assumption of Lemma~\ref{lipdecomp} is satisfied, that is, $\diam f(x_0)> 6L(n-1)$. Decompose  $f=g\,\cup\, h$ as in the lemma. By the inductive hypothesis the maps $g$ and $h$ have $C_{n-1}L$-Lipschitz extensions $\widetilde{g},\widetilde{h}\colon B\to \R^{(n-1)}$.
Their union $\widetilde{g}\,\cup\, \widetilde{h}$ is a $C_{n-1}L$-Lipschitz map of $B$ into $\R^{(2n-2)}$, and it  agrees with $f$ on $\partial B$. Applying the Lipschitz retract 
from $\R^{(2n-2)}$ onto $\R^{(n)}$, we obtain the desired Lipschitz extension $\widetilde{f}$. 

\emph{Case 2}: $\diam f(x_0) \le 6L(n-1)$. Translating $f$, may assume that $0\in f(x_0)$. For $x\in \partial B$ and $0\le r\le 1$ define $\widetilde{f}(rx) = r\,f(x)$. Clearly, $\widetilde{f}$ is $L$-Lipschitz on every 
sphere $r\,\partial B$. Also, for any fixed $x\in\partial B$ the map $r\mapsto r\,f(x)$ is $6L(n-1)$-Lipschitz, since $f(x)$ is contained in the ball of radius $6L(n-1)$ centered at the origin. It follows that $\widetilde{f}$ is Lipschitz on $B$, with a constant of the form $C_nL$.  
\end{proof} 

\begin{remark} The proof of Lemma~\ref{homlem} would immediately extend to the space $(\R^d)^{(n)}$ with $d\ge 2$ if we had a version of
Lemma~\ref{lr} for this space.  
\end{remark}

\begin{proof}[Proof of Theorem~\ref{ALR}]  Let 
$f\colon \R^{(n)}\to \R^m$ be the bi-Lipschitz embedding provided by Theorem~\ref{embed}.
Being Lipschitz $k$-connected for all $k$ (Lemma~\ref{homlem}), the set $E=f(\R^{(n)})$  enjoys the following Lipschitz  
extension property: every Lipschitz map from a subset of $\R^m$ to $E$ extends to a Lipschitz map from 
$\R^m$ to $E$ (see Corollary~1.7 in~\cite{LS} or Theorem 6.26 in~\cite{BB}). 
Extending the identity map $\mathrm{id}\colon E\to E$ in this way, we obtain the desired 
Lipschitz retraction $r\colon \R^m \to E$. Since $\R^m$ is an absolute Lipschitz retract, so are $E$
and $\R^{(n)}$. 
\end{proof}

\bibliographystyle{amsplain}

\end{document}